\providecommand{\U}[1]{\protect\rule{.1in}{.1in}}
\numberwithin{equation}{section}
\newtheorem{theorem}{Theorem}[section]
\newtheorem{acknowledgement}[theorem]{Acknowledgement}
\newtheorem{lemma}[theorem]{Lemma}
\newenvironment{proof}[1][Proof]{\noindent\textbf{#1.} }{\ \rule{0.5em}{0.5em}}
\begin{document}

\title{Asymptotic formulae for $s-$numbers of a Sobolev embedding and a Volterra type operator}
\author{David E. Edmunds
\and Jan Lang}
\date{}
\maketitle

\begin{abstract}
Sharp upper and lower estimates are obtained of the approximation numbers of a
Sobolev embedding and an integral operator of Volterra type. These lead to
asymptotic formulae for the approximation numbers and certain other $s-$numbers.

Key Words: Approximation numbers, Sobolev embedding, Volterra-type operator.

MSC: 47G10, 47B10

\end{abstract}

\section{Introduction}

Let $\Omega$ be a bounded open subset of $\mathbb{R}^{n}$ ($n\in\mathbb{N})$
with smooth boundary, and suppose that $p\in(1,\infty),$ $m\in\mathbb{N}.$ It
is a familiar fact that the embedding $T$ of the Sobolev space $W_{p}%
^{m}(\Omega)$ in $L_{p}(\Omega)$ is compact and that its approximation numbers
$a_{k}(T)$ decay like $k^{-m/n};$ that is, there are positive constants
$c_{1},c_{2}$ such that for all $k\in\mathbb{N},$%
\[
c_{1}\leq k^{m/n}a_{k}(T)\leq c_{2}.
\]
The same holds for the embedding $E$ of $\overset{0}{W}$\/$_{p}^{m}(\Omega)$
(the closure of $C_{0}^{\infty}(\Omega)$ in $W_{p}^{m}(\Omega))$ in
$L_{p}(\Omega),$ with no restriction on the boundary of $\Omega.$ For these
results, and much more general ones, together with some historical remarks
about their development, see \cite{ET} and \cite{Har}. What is not so clear is
whether or not there is a genuine asymptotic formula for these approximation
numbers; that is, for example, whether or not
\[
\lim_{k\rightarrow\infty}k^{m/n}a_{k}(E)\text{ exists.}%
\]
When $n>1,$ the only case in which an answer is known is when $p=2.$ For
example, if $m=1$ and $p=2,$ then $a_{k}(E)=\lambda_{k}^{-1/2},$ where
$\lambda_{k}$ is the $k^{th}$ eigenvalue (arranged in increasing order and
repeated according to multiplicity) of the Dirichlet Laplacian, and from the
known behaviour of these eigenvalues it follows that
\[
\lim_{k\rightarrow\infty}k^{1/n}a_{k}(E)=\frac{\left\vert \Omega\right\vert
^{1/n}}{2\sqrt{\pi}\left(  \Gamma(1+n/2)\right)  ^{1/n}}.
\]
Similar results hold when $m>1,$ always provided that $p=2.$

When $n=1$ and $\Omega=(a,b),$ even though the technical difficulties
encountered are naturally fewer, the only result when $p\neq2$ of which we are
aware is that of \cite{EL1}. In this it is shown that if $m=1,$ then not only
is there a positive answer to our question, but in fact the approximation
numbers can be calculated precisely, and
\[
a_{k}(E)=\gamma_{p}(b-a)/k\text{ \ }(k\in\mathbb{N}),
\]
where
\[
\gamma_{p}=\frac{1}{2\pi}(p^{\prime})^{1/p}p^{1/p^{\prime}}\sin(\pi/2).
\]
In the present paper, where we study the case $n=1,\Omega=(a,b)$ and $m=2,$ no
such precision is obtained but we do have an asymptotic result: for any
$p\in(1,\infty),$ there is a constant $C=C(p)$ such that for the embedding
$E:\overset{0}{W}$\/$_{p}^{2}(\Omega)\rightarrow L_{p}(\Omega),$
\[
\lim_{k\rightarrow\infty}k^{2}a_{k}(E)=C(b-a)^{2}.
\]
We also obtain a similar asymptotic result for the Volterra type operator
$T:L_{p}((a,b))\rightarrow$ $L_{p}((a,b))$ given by $Tf(x):=\int_{a}%
^{x}(x-t)f(t)dt.$ This can be compared to the result obtained, when $p=2,$ by
Newman and Solomyak for the singular numbers of a map of weighted fractional
integration type, the action taking place in $L_{2}(\mathbb{R}_{+})$ (see
\cite{NS}).

The proof involves the characterisation in variational form of the principal
eigenvalue of the biharmonic operator with Navier boundary conditions, plus
sharp upper estimates for the approximation numbers and sharp lower estimates
for the Bernstein numbers. Since the Bernstein numbers are dominated by the
approximation numbers, the limiting assertion follows. It also holds for
certain other $s-$numbers, such as the Bernstein, Gelfand and Weyl numbers.

\section{Preliminaries}

Let $X$ and $Y$ be Banach spaces with norms $\left\Vert \cdot\right\Vert _{X}$
, $\left\Vert \cdot\right\Vert _{Y}$ respectively, let $B(X,Y)$ be the space
of all bounded linear maps from $X$ to $Y,$ let $T\in B(X,Y)$ and suppose that
$n\in\mathbb{N}.$ The $n^{th}$ approximation number $a_{n}(S)$ of $S$ is given
by%
\[
a_{n}(S)=\inf\left\{  \left\Vert T-F\right\Vert :F\in B(X,Y),\text{ rank
}F<n\right\}  ;
\]
its $n^{th}$ Bernstein number is
\[
b_{n}(S)=\sup\text{ }\inf_{x\in X_{n}\backslash\{0\}}\left\Vert Tx\right\Vert
_{Y}/\left\Vert x\right\Vert _{X},
\]
where the supremum is taken over all $n-$dimensional linear subspaces $X_{n}$
of $X.$ Note that $b_{n}(S)\leq a_{n}(S)$ for all $n$ and all $S.$ In fact,
the approximation numbers are the largest $s-$numbers and the Bernstein
numbers are the smallest injective strict $s-$numbers: for these results and
the terminology used to describe them, see \cite{EL2}, Chapter 5. Here we
simply mention that the class of injective strict $s-$numbers includes the
Gelfand and Weyl numbers, as well as the Bernstein numbers.

Let $p\in(1,\infty),$ let $a,b\in\mathbb{R}$ with $a<b,$ and put
$I=(a,b),\left\vert I\right\vert =b-a.$ The norm on the Lebesgue space
$L_{p}(I)$ will be denoted by $\left\Vert \cdot\right\Vert _{p,I}.$ Given any
$k\in\mathbb{N},$ the Sobolev space $W_{p}^{k}(I)$ is defined to be the set of
all functions $u\in L_{p}(I)$ such that for each $j\in\{1,...,k\}$ the
distributional derivative $u^{(j)}$ \ exists (written as $u^{\prime}%
,u^{\prime\prime}$ when $j=1,2)$ and also belongs to $L_{p}(I);$ $\overset
{0}{W}$\/$_{p}^{k}(I)$ will stand for the closure of $C_{0}^{\infty}(I)$ in
$W_{p}^{k}(I)$ equipped with the norm $\|u\|_{\overset{0}{{W}_{p}^{k}}(I)}:=
\|u^{(k)}\|_{p,I}.$

We recall that if $u\in W_{p}^{k}(I),$ then $u^{(k-1)}$ is absolutely
continuous on $\overline{I\ };$ if $u\in\overset{0}{W}$\/$_{p}^{k}(I)$ \ then
additionally we have $u^{(j)}(a)=u^{(j)}(b)=0$ for $j=0,1,...,k-1.$\ \ 

The papers \cite{BD}, \cite{DO} consider an eigenvalue problem for the
$p-$biharmonic operator with Navier boundary conditions. The one-dimensional
form of this problem is
\begin{equation}
\left(  \left\vert u^{\prime}\right\vert ^{p-2}u^{\prime}\right)  ^{^{\prime}%
}=\lambda\left\vert u\right\vert ^{p-2}u\text{ in }I,\text{ }u=u^{\prime
\prime}=0\text{ at the endpoints }a,b. \label{Eq 2.1}%
\end{equation}
It is shown that ($\ref{Eq 2.1})$ has a least eigenvalue $\lambda,$ which is
positive, simple and isolated, and is given by%
\[
\lambda_{1}=\min\frac{\left\Vert u^{\prime\prime}\right\Vert _{p,I}^{p}%
}{\left\Vert u\right\Vert _{p,I}^{p}},
\]
where the minimum is taken over all $u\in W_{p}^{2}(I)\cap\overset{0}{W}%
$\/$_{p}^{1}(I),$ $u\neq0.$ It is convenient for us to deal with the
reciprocal of this quotient, and we set%
\begin{equation}
J^{0}(I)=J^{0}(a,b)=\sup\frac{\left\Vert u\right\Vert _{p,I}}{\left\Vert
u^{\prime\prime}\right\Vert _{p,I}}, \label{Eq 2.2}%
\end{equation}
where the supremum is taken over all $u\in W_{p}^{2}(I)\cap\overset{0}{W}%
$\/$_{p}^{1}(I),$ $u^{\prime\prime}\neq0,$ so that $u(a)=u(b)=0;$ thus
$J^{0}(I)=\lambda_{1}^{-1/p}.$ Denoting by $f$ the extremal function for
(\ref{Eq 2.2}), we have from \cite{DO} that $f$ is symmetric about the
mid-point $(a+b)/2$ of the interval $I$ and that $f^{\prime}\left(  \left(
a+b\right)  /2\right)  =0.$ Two further quantities related to $J^{0}(I)$ will
be needed: these are \
\begin{equation}
\quad J^{a}(I):=\sup_{u^{\prime\prime}\not =0,u(a)=0}{\frac{\Vert u\Vert
_{p,I}}{\Vert u^{\prime\prime}\Vert_{p,I}}}, \label{Eq 2.3}%
\end{equation}
and
\begin{equation}
J^{b}(I):=\sup_{u^{\prime\prime}\not =0,u(b)=0}{\frac{\Vert u\Vert_{p,I}%
}{\Vert u^{\prime\prime}\Vert_{p,I}}}. \label{Eq 2.4}%
\end{equation}
Here our notation means that the suprema are taken over all those non-zero
$u\in W_{p}^{2}(I)$ that vanish at $a,b$ respectively.

Let $I_{1}=(a,(a+b)/2)$ and $I_{2}=((a+b)/2,b).$ Then from the above
observations we have that $f_{1}:=\chi_{I_{1}}f$ and $f_{2}:=\chi_{I_{2}}f$
are the extremal functions for $J^{a}(I_{1}),J^{b}(I_{2})$ respectively. Also
we have
\begin{equation}
J^{0}(I)=J^{a}(I_{1})=J^{b}(I_{2}), \label{Eq 2.5}%
\end{equation}
and by scaling we obtain
\begin{equation}
J^{0}(I)=|I|^{2}J^{0}((0,1)). \label{Eq 2.6}%
\end{equation}
Next, we let
\begin{equation}
\mathcal{A}^{-}(I):=\sup_{u^{\prime\prime}\not =0,u(a)=u^{\prime}(a)=0}%
{\frac{\Vert u-u(b)\Vert_{p,I}}{\Vert u^{\prime\prime}\Vert_{p,I}}}
\label{Eq 2.7}%
\end{equation}
and
\begin{equation}
\mathcal{A}^{+}(I):=\sup_{u^{\prime\prime}\not =0,u(b)=u^{\prime}(b)=0}%
{\frac{\Vert u-u(a)\Vert_{p,I}}{\Vert u^{\prime\prime}\Vert_{p,I}},}
\label{Eq 2.8}%
\end{equation}
with the same understanding about the notation as above. As the following
lemma shows, these are not really new quantities.

\begin{lemma}
\label{Lemma 2.1}
\begin{equation}
\mathcal{A}^{+}(I)=J^{a}(I)\ \text{and }\mathcal{A}^{-}(I)=J^{b}(I).
\label{Eq 2.9}%
\end{equation}

\end{lemma}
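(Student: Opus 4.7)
The plan is to reformulate $\mathcal{A}^+(I)$ by a change of variables so that it manifestly coincides with $J^a(I)$ subject to one extra boundary condition, and then to show that the extra condition is not binding by exploiting the symmetry of the $J^0$-extremal noted just after (\ref{Eq 2.2}). First I would make the substitution: for any admissible $u$ in (\ref{Eq 2.8}) (so $u(b) = u'(b) = 0$), set $w := u - u(a)$; then $w'' = u''$, $\|u - u(a)\|_{p,I} = \|w\|_{p,I}$, $w(a) = 0$, and $w'(b) = u'(b) = 0$. The map $u \mapsto w$ is a bijection onto $\{w \in W_{p}^{2}(I) : w(a) = 0,\ w'(b) = 0\}$ with inverse $u = w - w(b)$, so
\[
\mathcal{A}^+(I) = \sup_{w(a)=0,\, w'(b)=0,\, w''\not\equiv 0} \frac{\|w\|_{p,I}}{\|w''\|_{p,I}}.
\]
Since this admissible class is a subset of the one for $J^a(I)$, the inequality $\mathcal{A}^+(I) \leq J^a(I)$ is immediate.

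For the reverse inequality I would produce an extremizer for $J^a(I)$ that also satisfies $w'(b) = 0$. Let $I^{*} := (a, 2b - a)$, the interval of length $2|I|$ whose left half is $I$. Applying (\ref{Eq 2.5}) to $I^{*}$ (with $I$ playing the role of $I_1$) gives $J^0(I^{*}) = J^a(I)$. Let $f^{*}$ be the extremal for $J^0(I^{*})$; by the remarks following (\ref{Eq 2.2}), $f^{*}$ is symmetric about the midpoint $b$ of $I^{*}$ and $(f^{*})'(b) = 0$. Its restriction $w^{*} := f^{*}|_{I}$ then lies in $W_{p}^{2}(I)$ with $w^{*}(a) = 0$ and $(w^{*})'(b) = 0$, so $w^{*}$ is admissible for the reformulated $\mathcal{A}^+(I)$. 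Symmetry of $f^{*}$ and of $(f^{*})''$ about $b$ yields $\|f^{*}\|_{p,I^{*}}^{p} = 2\|w^{*}\|_{p,I}^{p}$ and similarly for $(f^{*})''$, whence
\[
\frac{\|w^{*}\|_{p,I}}{\|(w^{*})''\|_{p,I}} = \frac{\|f^{*}\|_{p,I^{*}}}{\|(f^{*})''\|_{p,I^{*}}} = J^0(I^{*}) = J^a(I),
\]
so $\mathcal{A}^+(I) \geq J^a(I)$ and the first identity follows.

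The second identity $\mathcal{A}^-(I) = J^b(I)$ I would prove by the mirror-image argument: the substitution $w = u - u(b)$ turns $\mathcal{A}^-(I)$ into the sup of $\|w\|_{p,I}/\|w''\|_{p,I}$ over $\{w(b) = 0,\, w'(a) = 0\}$, and one doubles $I$ to the left to $(2a - b, b)$, whose $J^0$-extremal is symmetric about $a$. The only step requiring genuine thought is the reformulation of $\mathcal{A}^{\pm}$ in terms of a $J^{a}$- or $J^{b}$-type problem with one extra vanishing-derivative condition; once that is spotted, the symmetry of the $J^0$-extremal (already recorded via (\ref{Eq 2.5})) does the rest.
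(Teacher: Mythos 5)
Your proof is correct and follows essentially the same route as the paper: both directions rest on the affine shift $u\mapsto u-u(a)$ together with the fact that the extremal for $J^{a}(I)$ has vanishing derivative at $b$. The only difference is cosmetic — you derive that last fact explicitly by doubling the interval and invoking (\ref{Eq 2.5}) and the symmetry of the $J^{0}$-extremal, whereas the paper simply asserts it from the discussion preceding the lemma.
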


\begin{proof}
We prove the first equality only as the second follows by a symmetric
argument. Let $f$ be the extremal function for $J^{a}(I)$. Then $f(a)=0$, $f$
is increasing on $I$ and $f^{\prime}(b)=0$. Let $v:=f(b)-f,$ so that $v(b)=0,$
$v^{\prime}(b)=-f^{\prime}(b)=0$ and
\[
J^{a}(I)={\frac{\Vert f\Vert_{p,I}}{\Vert f^{\prime\prime}\Vert_{p,I}}}%
={\frac{\Vert v-v(a)\Vert_{p,I}}{\Vert v^{\prime\prime}\Vert_{p,I}}}\leq
\sup_{u^{\prime\prime}\not =0,u(b)=u^{\prime}(b)=0}{\frac{\Vert u-u(a)\Vert
_{p,I}}{\Vert u^{\prime\prime}\Vert_{p,I}}}=\mathcal{A}^{+}(I).
\]

Now let $u$ be the extremal function for $\mathcal{A}^{+}(I)$. With
$g:=u-u(a)$ we have $g(a)=0,$ $g^{\prime}(b)=0$ and
\[
\mathcal{A}^{+}(I)={\frac{\Vert u-u(a)\Vert_{p,I}}{\Vert u^{\prime\prime}%
\Vert_{p,I}}}={\frac{\Vert g\Vert_{p,I}}{\Vert g^{\prime\prime}\Vert_{p,I}}%
}\leq J^{a}(I),
\]
which concludes the proof.
\end{proof}

\medskip

Now let $J=(c,d)\subset(a,b)=I;$ set
\begin{equation}
K_{J}f(x):=u(c)+{\frac{u(d)-u(c)}{d-c}}(x-c)\text{ \ }(x\in J) \label{Eq 2.10}%
\end{equation}
and (with the same convention about notation as before)
\begin{equation}
\mathcal{B}(J)=\mathcal{B}(c,d):=\sup_{u^{\prime\prime}\not =0}{\frac{\Vert
u-K_{J}u\Vert_{p,J}}{\Vert u^{\prime\prime}\Vert_{p,J}}}. \label{Eq 2.11}%
\end{equation}

Note that $u(c)-K_{J}u(c)=u(d)-K_{J}u(d)=0$.

\begin{lemma}
\label{Lemma 2.2} When $p\in(1,\infty)$ and $I=(a,b)\subset\mathbb{R},$%
\[
\mathcal{B}(I)=J^{0}(I).
\]

\end{lemma}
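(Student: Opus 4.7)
The plan is to show both inequalities by observing that the two suprema range over essentially the same family of test functions, via the affine correction $K_I u$.

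For the inequality $\mathcal{B}(I)\le J^{0}(I)$, I would take an arbitrary admissible $u$ (so $u''\neq 0$) in (\ref{Eq 2.11}) and set $v:=u-K_{I}u$. Because $K_{I}u$ is the affine function interpolating $u$ at the endpoints, two things happen simultaneously: $v(a)=v(b)=0$, so $v\in W_{p}^{2}(I)\cap\overset{0}{W}_{p}^{1}(I)$, and $(K_{I}u)''=0$, so $v''=u''$. Hence $v$ is an admissible test function for the supremum defining $J^{0}(I)$, and
\[
\frac{\Vert u-K_{I}u\Vert_{p,I}}{\Vert u''\Vert_{p,I}}=\frac{\Vert v\Vert_{p,I}}{\Vert v''\Vert_{p,I}}\le J^{0}(I).
\]
Taking the supremum over $u$ gives $\mathcal{B}(I)\le J^{0}(I)$.

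For the reverse inequality $\mathcal{B}(I)\ge J^{0}(I)$, I would simply observe that any $u\in W_{p}^{2}(I)\cap\overset{0}{W}_{p}^{1}(I)$ with $u''\neq 0$ satisfies $u(a)=u(b)=0$, so $K_{I}u\equiv 0$ on $I$ by (\ref{Eq 2.10}). Thus $u-K_{I}u=u$, and $u$ is itself an admissible test function for the supremum in (\ref{Eq 2.11}), giving
\[
\frac{\Vert u\Vert_{p,I}}{\Vert u''\Vert_{p,I}}=\frac{\Vert u-K_{I}u\Vert_{p,I}}{\Vert u''\Vert_{p,I}}\le\mathcal{B}(I).
\]
Taking the supremum over $u$ yields $J^{0}(I)\le\mathcal{B}(I)$, completing the proof.

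There is no real obstacle here: the identity is essentially a tautology once one notices that subtracting the linear interpolant is an affine-invariant operation that fixes the second derivative and produces a function vanishing at both endpoints, while conversely any function vanishing at the endpoints has a zero linear interpolant. The only thing to be slightly careful about is that $u-K_I u$ does lie in $W_p^2(I)$ and genuinely vanishes at both endpoints (immediate from the construction of $K_I u$), so that it is admissible in the variational characterisation of $J^0(I)$.
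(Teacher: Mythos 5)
Your proof is correct and follows essentially the same route as the paper: subtracting the affine interpolant kills nothing in the second derivative and produces a function vanishing at both endpoints, while functions vanishing at both endpoints have zero interpolant. The only (harmless, arguably preferable) difference is that you argue with arbitrary admissible test functions and pass to suprema, whereas the paper phrases the same computation in terms of the extremal functions for $\mathcal{B}(I)$ and $J^{0}(I)$.
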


\begin{proof}
{} Let $u$ be the extremal function for $\mathcal{B}(I)$ and set $g=u-K_{I}u$.
Then $g(a)=g(b)=0$ and
\[
\mathcal{B}(I)={\frac{\Vert u-K_{I}u\Vert_{p,I}}{\Vert u^{\prime\prime}%
\Vert_{p,I}}}={\frac{\Vert g\Vert_{p,I}}{\Vert g^{\prime\prime}\Vert_{p,I}}%
}\leq\sup_{f^{\prime\prime}\not =0,f(a)=f(b)=0}{\frac{\Vert f\Vert_{p,I}%
}{\Vert f^{\prime\prime}\Vert_{p,I}}}=J^{0}(I).
\]

To establish the opposite inequality, denote by $f$ the extremal function for
$J^{0}(I)$. Then $f(a)=f(b)=0$ and $K_{I}f(x)=0$ on $I$. Hence
\[
J^{0}(I)={\frac{\Vert f\Vert_{p,I}}{\Vert f^{\prime\prime}\Vert_{p,I}}}%
={\frac{\Vert f-K_{I}f\Vert_{p,I}}{\Vert f^{\prime\prime}\Vert_{p,I}}}%
\leq\mathcal{B}(I)
\]
and the proof is complete.
\end{proof}

\section{Estimates for $s$-numbers of a Sobolev embedding}

Our concern here is with the embedding $E:\overset{0}{W}$\/$_{p}%
^{2}(I)\rightarrow L_{p}(I):$ we repeat that $1<p<\infty$ and $I=(a,b)\subset
\mathbb{R}.$ We begin with an upper estimate for the approximation numbers of
$E.$

\begin{theorem}
\label{Theorem 3.1}For all $n\in\mathbb{N},$
\begin{equation}
a_{n+1}(E)\leq{\frac{|I|^{2}}{n^{2}}}\mathcal{B}(0,1). \label{Eq 3.1}%
\end{equation}

\end{theorem}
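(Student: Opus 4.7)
The plan is to construct, for each $n \in \mathbb{N}$, an explicit finite-rank operator $F_n \in B(\overset{0}{W}{}_p^2(I), L_p(I))$ whose distance from $E$ realises the right-hand side of (\ref{Eq 3.1}). Partition $I = (a,b)$ into $n$ equal subintervals $I_j = (x_{j-1}, x_j)$, with $x_j = a + j|I|/n$ for $j = 0, 1, \ldots, n$, and define $F_n u$ on each $I_j$ to be the piecewise linear interpolant $K_{I_j} u$ from (\ref{Eq 2.10}) matching $u$ at the two nodes $x_{j-1}$ and $x_j$. Since $u(a) = u(b) = 0$ for every $u \in \overset{0}{W}{}_p^2(I)$, the image of $F_n$ lies in the span of the $n-1$ piecewise linear hat functions attached to the interior nodes $x_1, \ldots, x_{n-1}$, whence $\rank F_n \leq n - 1$, which is strictly less than $n+1$ as required by the definition of $a_{n+1}(E)$.

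For the norm estimate, Lemma \ref{Lemma 2.2} identifies $\mathcal{B}(I_j)$ with $J^{0}(I_j)$, and the scaling identity (\ref{Eq 2.6}) (transferred from $J^{0}$ to $\mathcal{B}$ via the affine change of variables mapping $I_j$ onto $(0,1)$) yields $\mathcal{B}(I_j) = (|I|/n)^2 \mathcal{B}(0,1)$. Applying (\ref{Eq 2.11}) on each piece gives
\begin{equation*}
\| u - F_n u \|_{p, I_j} = \| u - K_{I_j} u \|_{p, I_j} \leq \mathcal{B}(I_j) \, \| u'' \|_{p, I_j} = \frac{|I|^2}{n^2} \mathcal{B}(0,1) \, \| u'' \|_{p, I_j}.
\end{equation*}
Raising to the $p$-th power, summing over $j = 1, \ldots, n$, and using the additivity of $\|\cdot\|_{p, I_j}^p$ on the disjoint partition produces $\| u - F_n u \|_{p, I} \leq (|I|^2 / n^2) \mathcal{B}(0,1) \| u'' \|_{p, I}$. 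Since $\| u \|_{\overset{0}{W}{}_p^2(I)} = \| u'' \|_{p, I}$ by definition, this gives $\| E - F_n \| \leq |I|^2 \mathcal{B}(0,1) / n^2$, and the definition of the approximation numbers now yields (\ref{Eq 3.1}).

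The only point requiring care is that the scaling of $\mathcal{B}$ acquires exactly the constant $(|I|/n)^2$ with no additional factor; this is where Lemma \ref{Lemma 2.2} does the essential work, since (\ref{Eq 2.6}) is stated only for $J^{0}$. Beyond that, the argument is routine: partition, interpolate on each piece via $K_{I_j}$, estimate locally using the model constant $\mathcal{B}(0,1)$, and reassemble by exploiting the additivity of the $p$-th power of the $L_p$ norm over a disjoint partition.
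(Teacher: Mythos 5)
Your argument is correct and yields the stated bound (indeed a nominally sharper one), but it handles the boundary in a genuinely different way from the paper, so it is worth comparing the two. The common skeleton is the same: partition $I$, project onto piecewise linear functions via $K_J$, estimate locally by the constant $\mathcal{B}$ of each piece, and reassemble using additivity of $\Vert\cdot\Vert_{p}^{p}$ over the partition. The paper, however, splits $I$ into $n+1$ pieces with the two outermost of half length $|I|/(2n)$, approximates $f$ on those end pieces by the constants $f(a)=0$ and $f(b)=0$, and controls the resulting error through the one-sided quantities $\mathcal{A}^{\pm}$, Lemma \ref{Lemma 2.1} and the identity (\ref{Eq 2.5}); the halving of the end intervals is precisely what makes the one-sided constant equal $(|I|/n)^{2}\mathcal{B}(0,1)$. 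You instead take $n$ equal pieces and use the two-point interpolant $K_{I_j}$ on every one of them, so the norm estimate needs only Lemma \ref{Lemma 2.2} and the scaling (\ref{Eq 2.6}), and the boundary conditions $u(a)=u(b)=0$ enter solely to cut the rank of $F_n$ from $n+1$ to $n-1$. Since your operator has rank $<n$, you in fact prove $a_{n}(E)\leq (|I|/n)^{2}\mathcal{B}(0,1)$, which gives (\ref{Eq 3.1}) by monotonicity of the approximation numbers and is a one-index improvement on the paper's statement; this changes nothing asymptotically and is consistent with the lower bound of Theorem \ref{Theorem 3.3}, but for this particular theorem your route is the more economical one. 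Note that the paper's apparatus of $\mathcal{A}^{\pm}$, $J^{a}$, $J^{b}$ is still needed for the embedding $E_a$ in Theorem \ref{Theorem 3.5}, where only one endpoint condition is available and an end interval must be treated one-sidedly.
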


\begin{proof}
Let $n\in\mathbb{N}$ and let $I=\cup_{i=0}^{n}I_{i}$, where $I_{i}%
=(a_{i},a_{i+1}]$ for $i=0,1,...,n-1,$ $I_{n}=(a_{n},a_{n+1}),$ with
\begin{align*}
a_{0}  &  =a,a_{1}=a+{\frac{|I|}{2n}},\\
a_{i}  &  =a+{\frac{|I|}{2n}}+(i-1){\frac{|I|}{n}}\quad\text{for }1<i\le n,
\end{align*}
and $a_{n+1}=b.$ Then we have $2|I_{0}|=2|I_{n}|=|I_{i}|=(b-a)/n$ for $0<i<n$.
For any function $f$ on $I$ set
\[
Kf(x):=\chi_{I_{0}}(x)f(a)+\chi_{I_{n}}(x)f(b)+\sum_{i=1}^{n-1}\chi_{I_{i}%
}(x)K_{I_{i}}f(x),
\]
where $K_{I}$ is given by (\ref{Eq 2.10}). With each supremum taken over all
functions $f$ in $W_{0}^{2,p}(I)\backslash\{0\}$ we have%

\begin{align*}
a_{n+1}^{p}(E)  &  \leq\sup{\frac{\Vert f-Kf\Vert_{p,I}^{p}}{\Vert
f^{\prime\prime}\Vert_{p,I}^{p}}}\\
&  =\sup{\frac{\Vert f\Vert_{p,I_{0}}^{p}+\Vert f\Vert_{p,I_{n}}^{p}%
+\sum_{i=1}^{n-1}\Vert f-K_{I_{i}}f\Vert_{p,I_{i}}^{p}}{\sum_{i=0}^{n}\Vert
f^{\prime\prime}\Vert_{p,I}^{p}}}\\
&  \leq\sup{\frac{\left\Vert f^{\prime\prime}\right\Vert _{p,I_{0}}%
^{p}[\mathcal{A}^{+}(I_{0})]^{p}+\left\Vert f^{\prime\prime}\right\Vert
_{p,I_{n}}^{p}[\mathcal{A}^{-}(I_{n})]^{p}+\sum_{i=1}^{n-1}\Vert
f^{\prime\prime}\Vert_{p,I_{i}}^{p}[\mathcal{B}(I_{i})]^{p}}{\sum_{i=0}%
^{n}\Vert f^{\prime\prime}\Vert_{p,I}^{p}}.}%
\end{align*}

Now use of the facts that%
\[
\mathcal{A}^{+}(I_{0})=\mathcal{A}^{-}(I_{n})=\mathcal{B}(I_{i})={{\frac
{|I|^{2}}{n^{2}}}}\mathcal{B}(0,1)\text{ for }0<i<n,
\]
which follow from Lemmas \ref{Lemma 2.1} and \ref{Lemma 2.2}, together with
(\ref{Eq 2.5}), shows that
\[
a_{n+1}^{p}(E)\leq\sup{\frac{\sum_{i=0}^{n}\Vert f^{\prime\prime}\Vert
_{p,I}^{p}(\mathcal{B}(0,1)|I|^{2}/n^{2})^{p}}{\sum_{i=0}^{n}\Vert
f^{\prime\prime}\Vert_{p,I}^{p}}}=\left(  {\frac{\mathcal{B}(0,1)|I|^{2}%
}{n^{2}}}\right)  ^{p}.
\]

\end{proof}

To prove the reverse estimate we need the next elementary technical lemma.

\begin{lemma}
\label{Lemma 3.2} Let $1<p<\infty$ and $f$ be an odd increasing continuous
function on $(-1,1)$. Then
\[
\Vert f\Vert_{p,(-1,1)}=\inf_{\lambda\in\Re}\Vert f-\lambda\Vert_{p,(-1,1)}.
\]
{}
\end{lemma}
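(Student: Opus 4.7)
The plan is to show that $\lambda=0$ achieves the infimum, by a symmetry and convexity argument. The key reduction is this: for any $\lambda\in\mathbb{R}$, the oddness of $f$ gives
\[
\int_{-1}^{1}|f(x)-\lambda|^{p}\,dx
=\int_{0}^{1}\bigl(|f(x)-\lambda|^{p}+|f(-x)-\lambda|^{p}\bigr)\,dx
=\int_{0}^{1}\bigl(|f(x)-\lambda|^{p}+|f(x)+\lambda|^{p}\bigr)\,dx,
\]
obtained by splitting $(-1,1)$ at $0$ and substituting $x\mapsto -x$ on the left half, using $f(-x)=-f(x)$.

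The second step is to invoke the convexity of $t\mapsto|t|^{p}$. Writing $a=f(x)$ and viewing $a-\lambda$ and $a+\lambda$ as two points with midpoint $a$, convexity gives the pointwise inequality
\[
\tfrac{1}{2}\bigl(|a-\lambda|^{p}+|a+\lambda|^{p}\bigr)\ \ge\ |a|^{p}.
\]
Integrating this over $(0,1)$ with $a=f(x)$ and combining with the identity above yields
\[
\int_{-1}^{1}|f(x)-\lambda|^{p}\,dx\ \ge\ 2\int_{0}^{1}|f(x)|^{p}\,dx=\int_{-1}^{1}|f(x)|^{p}\,dx,
\]
which is exactly $\|f-\lambda\|_{p,(-1,1)}\ge\|f\|_{p,(-1,1)}$. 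Since equality clearly holds at $\lambda=0$, the asserted identity follows.

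There is really no obstacle here: the hypothesis that $f$ is increasing and continuous is not used in the estimate, only oddness and $p>1$ (continuity of $f$ ensures that $f\in L_{p}(-1,1)$ automatically, as $f$ is bounded on $(-1,1)$ by monotonicity). The lone subtle point worth flagging is that strict convexity of $|t|^{p}$ for $p\in(1,\infty)$ makes the pointwise inequality strict whenever $\lambda\ne 0$ and $f\not\equiv 0$, so $\lambda=0$ is in fact the \emph{unique} minimizer; the statement of the lemma, however, only records the value of the infimum.
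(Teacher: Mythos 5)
Your symmetry-plus-convexity argument is correct and complete. The paper itself states Lemma \ref{Lemma 3.2} as an ``elementary technical lemma'' and supplies no proof at all, so there is nothing to compare against; your reduction to $\int_0^1\bigl(|f(x)-\lambda|^p+|f(x)+\lambda|^p\bigr)\,dx$ via oddness, followed by the midpoint convexity inequality $\tfrac12\bigl(|a-\lambda|^p+|a+\lambda|^p\bigr)\ge|a|^p$, does exactly what is needed, and your observation that monotonicity plays no role in the estimate is accurate. One parenthetical remark is wrong, though it does not damage the proof: an increasing continuous function on the \emph{open} interval $(-1,1)$ need not be bounded (consider $f(x)=\tan(\pi x/2)$, which is odd, increasing, continuous, and not in $L_p(-1,1)$), so continuity plus monotonicity do not by themselves guarantee $f\in L_p(-1,1)$. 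This is harmless: if $f\notin L_p(-1,1)$ then $f-\lambda\notin L_p(-1,1)$ for every $\lambda$, since constants belong to $L_p$ of a bounded interval, and the asserted identity reads $\infty=\infty$; moreover, in the only place the lemma is applied (the proof of Theorem \ref{Theorem 3.3}) the function in question is a rescaled extremal function lying in $W_p^2$ of a bounded interval, hence bounded. Your closing remark that strict convexity of $|t|^p$ for $p>1$ makes $\lambda=0$ the unique minimiser when $f\not\equiv0$ is also correct, and is a small bonus beyond what the lemma asserts.
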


\begin{theorem}
\label{Theorem 3.3}For all $n\in\mathbb{N},$
\[
b_{n-1}(E)\geq{\frac{|I|^{2}}{n^{2}}}\mathcal{B}(0,1).
\]

\end{theorem}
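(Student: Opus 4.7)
The plan is to construct an $(n-1)$-dimensional subspace $X_{n-1}$ of $\overset{0}{W}{}_{p}^{2}(I)$ on which every nonzero $u$ satisfies $\|u\|_{p,I}/\|u''\|_{p,I}\ge \frac{|I|^{2}}{n^{2}}\mathcal{B}(0,1)$; the claimed bound on $b_{n-1}(E)$ then follows directly from the definition of the Bernstein numbers. I would mirror the $(n+1)$-piece decomposition used in Theorem \ref{Theorem 3.1}, namely $I=\bigcup_{i=0}^{n}I_{i}$ with $|I_{0}|=|I_{n}|=|I|/(2n)$ and $|I_{i}|=h:=|I|/n$ for $0<i<n$. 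On $I_{0}$, on $I_{i}$ ($0<i<n$), and on $I_{n}$, let $\phi_{i}$ denote the extremal function of $\mathcal{A}^{+}(I_{0})$, $\mathcal{B}(I_{i})$, and $\mathcal{A}^{-}(I_{n})$ respectively; by Lemmas \ref{Lemma 2.1}--\ref{Lemma 2.2} and (\ref{Eq 2.5})--(\ref{Eq 2.6}) each $\phi_{i}$ is symmetric about the midpoint $m_{i}$ of $I_{i}$ and satisfies $\|\phi_{i}\|_{p,I_{i}}=\frac{|I|^{2}}{n^{2}}\mathcal{B}(0,1)\,\|\phi_{i}''\|_{p,I_{i}}$.

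The candidate subspace consists of all $u\in \overset{0}{W}{}_{p}^{2}(I)$ of the form $u|_{I_{i}}=a_{i}\phi_{i}+\ell_{i}$ on each $I_{i}$, with $\ell_{i}$ affine, subject to $C^{1}$-matching at the $n$ interior junctions and to the clamped conditions $u(a)=u'(a)=u(b)=u'(b)=0$. A direct parameter count ($3(n+1)$ unknowns against $2n+4$ linear constraints) shows this subspace has dimension at least $n-1$. Since $u''|_{I_{i}}=a_{i}\phi_{i}''$, summing $p$-th powers and invoking the extremal identity reduces the desired estimate to the subinterval-wise inequality
\[
\|u\|_{p,I_{i}}\ \ge\ |a_{i}|\,\|\phi_{i}\|_{p,I_{i}}\qquad\text{for }i=0,1,\ldots,n.
\]

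Establishing this subinterval inequality is the step I expect to be the main obstacle; it is also precisely the role of Lemma \ref{Lemma 3.2}. The difficulty is that $\phi_{i}$ is \emph{even} about $m_{i}$, so that a general affine correction can decrease its $L_{p}$-norm and the inequality is not automatic. However, after the change of variables $y=2(x-m_{i})/|I_{i}|$ mapping $I_{i}$ onto $(-1,1)$, the derivative $\phi_{i}'$ is odd and monotone, and Lemma \ref{Lemma 3.2} then delivers $\|a_{i}\phi_{i}'+c\|_{p,I_{i}}\ge|a_{i}|\,\|\phi_{i}'\|_{p,I_{i}}$ for every constant $c$. Specialising to $c=\ell_{i}'$ yields the derivative-level estimate $\|u'\|_{p,I_{i}}\ge|a_{i}|\,\|\phi_{i}'\|_{p,I_{i}}$. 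Lifting this to the function-level bound---by integrating from the clamped endpoint $a$ using $u(a)=u'(a)=0$, propagating through the junctions via the $C^{1}$-matching, and exploiting the identity $u|_{I_{i}}-K_{I_{i}}(u|_{I_{i}})=a_{i}\phi_{i}$ (which holds on the middle subintervals because $\phi_{i}$ vanishes at the endpoints of $I_{i}$)---is the technical heart of the proof, and executing this passage without forfeiting the sharp constant is the step demanding the greatest care.
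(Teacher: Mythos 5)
Your overall strategy (exhibit an explicit $(n-1)$-dimensional subspace and control it via Lemma \ref{Lemma 3.2}) is the right one, but the concrete construction contains a genuine gap. The reduction to the subinterval-wise inequality $\Vert u\Vert_{p,I_{i}}\geq|a_{i}|\,\Vert\phi_{i}\Vert_{p,I_{i}}$ is a reduction to a \emph{false} statement: on a middle interval the extremal $\phi_{i}$ of $\mathcal{B}(I_{i})$ is even about the midpoint and of one sign, hence has nonzero mean, so already the purely constant correction $\ell_{i}=-c$ with $c$ the best $L_{p}$-approximating constant gives $\Vert a_{i}\phi_{i}+\ell_{i}\Vert_{p,I_{i}}<|a_{i}|\,\Vert\phi_{i}\Vert_{p,I_{i}}$. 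Lemma \ref{Lemma 3.2} requires oddness and monotonicity and is simply unavailable for $\phi_{i}$ itself, exactly as you suspect. Your fallback to the derivative level does yield $\Vert u^{\prime}\Vert_{p,I_{i}}\geq|a_{i}|\,\Vert\phi_{i}^{\prime}\Vert_{p,I_{i}}$, but this cannot be ``lifted'': there is no inequality bounding $\Vert u\Vert_{p}$ from below by $\Vert u^{\prime}\Vert_{p}$ (think of oscillatory $u$), and the quantity the Bernstein number requires you to bound from below is $\Vert u\Vert_{p,I}$, not $\Vert u^{\prime}\Vert_{p,I}$. So the step you defer as ``the technical heart'' is not a technicality; it is precisely where the argument breaks.

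The paper avoids this by changing both the decomposition and the building blocks. It splits $I$ into $n$ \emph{equal} intervals $J_{i}$ and on each places the rescaling $f_{i}$ of the odd extension to $(-1,1)$ of the extremal function for $J^{a}(0,1)$; thus $f_{i}$ is odd and monotone about the midpoint of $J_{i}$, its two halves are extremal for $J^{b}$ and $J^{a}$ of the half-intervals, and (\ref{Eq 2.5})--(\ref{Eq 2.6}) give $\Vert f_{i}\Vert_{p,J_{i}}=J^{0}(J_{i})\Vert f_{i}^{\prime\prime}\Vert_{p,J_{i}}$ with $J^{0}(J_{i})=(|I|/n)^{2}J^{0}(0,1)$. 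The trial functions are the antiderivatives $h(x)=\int_{a}^{x}\sum_{i}\alpha_{i}f_{i}^{\prime}\chi_{J_{i}}$, with $\alpha_{n}$ chosen so that $h(b)=0$; since $f_{i}^{\prime}$ vanishes at the endpoints of each $J_{i}$, $h$ lies in $\overset{0}{W}{}_{p}^{2}(I)$ and on $J_{i}$ equals $\alpha_{i}f_{i}$ plus a \emph{constant} only. Lemma \ref{Lemma 3.2} then removes that constant at the function level --- which is exactly what your even $\phi_{i}$ cannot do. To salvage your approach, replace your $\phi_{i}$ by these odd blocks on an $n$-interval partition; the $(n+1)$-interval decomposition with even extremals is the right tool for the upper bound of Theorem \ref{Theorem 3.1}, not for the lower bound here.
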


\begin{proof}
Let $n\in\mathbb{N}$ and write $I=\cup_{i=1}^{n}J_{i}$, where $J_{i}%
=(b_{i},b_{i+1}]$ for $i=1,...,n-1,$ $J_{n}=(b_{n},b_{n+1}),$ with
\[
b_{i}=a+(i-1){\frac{|I|}{n}}\quad\text{for }1\leq i\leq n+1.
\]
Let $f$ be the extremal function for $J^{a}(0,1),$ so that $f(0)=0$ and we may
suppose that $f$ is decreasing. We extend this function by oddness to the
interval $[-1,1]:$ then $f\chi_{(-1,0)}$ is the extremal function for
$J^{b}(-1,0)$). Also $f^{\prime}(-1)=f^{\prime}(1)=0$ and $f^{\prime}>0$ on
$(-1,1)$. Define $f_{i}$ to be the rescaling of $f$ from the interval $(-1,1)$
to $J_{i}$ with $\Vert f_{i}^{\prime\prime}\Vert_{p,J_{i}}=1$.

Now take $\alpha=(\alpha_{1},...,\alpha_{n-1})\in\mathbb{R}^{n-1}$ and
${\alpha_{n}}\in\mathbb{R}$ ($\alpha_{n}$ will depend on the selection of
$\alpha$). Let
\[
h(x):=\int_{a}^{x}\sum_{i=1}^{n}\alpha_{i}f_{i}^{\prime}(t)\chi_{J_{i}}(t)dt,
\]
selecting $\alpha_{n}$ in such a way that $h(b)=0$. Denote by $H$ the
collection of all such functions $h;$ plainly $\mathrm{rank}$ $H=n-1$. Then
with aid of Lemma \ref{Lemma 3.2} we have%

\begin{align*}
{\frac{\Vert h\Vert_{p,I}^{p}}{\Vert h^{\prime\prime}\Vert_{p,I}^{ p}}}  &
={\frac{\sum_{i=1}^{n}\Vert h\Vert_{p,J_{i}}^{p}}{\sum_{i=1}^{n}\Vert
h^{\prime\prime}\Vert_{p,J_{i}}^{p}}}={\frac{\sum_{i=1}^{n}\Vert\alpha_{i}%
\int_{b_{i}}^{.}f_{i}^{\prime}(t)dt+h(b_{i})\Vert_{p,J_{i}}^{p}}{\sum
_{i=1}^{n}\Vert\alpha_{i}f_{i}^{\prime\prime}\Vert_{p,J_{i}}^{p}}}\\
&  \geq{\frac{\sum_{i=1}^{n}\inf_{\lambda}\Vert\alpha_{i}\int_{b_{i}}^{.}%
f_{i}^{\prime}(t)dt-\lambda\Vert_{p,J_{i}}^{p}}{\sum_{i=1}^{n}\Vert\alpha
_{i}f_{i}^{\prime\prime}\Vert_{p,J_{i}}^{p}}}\\
&  ={\frac{\sum_{i=1}^{n}\inf_{\lambda}\Vert\alpha_{i}f_{i}-\lambda
\Vert_{p,J_{i}}^{p}}{\sum_{i=1}^{n}\Vert\alpha_{i}f_{i}^{\prime\prime}%
\Vert_{p,J_{i}}^{p}}}\\
&  ={\frac{\sum_{i=1}^{n}\Vert\alpha_{i}f_{i}\Vert_{p,J_{i}}^{p}}{\sum
_{i=1}^{n}\Vert\alpha_{i}f_{i}^{\prime\prime}\Vert_{p,J_{i}}^{p}}}={\frac
{\sum_{i=1}^{n}|\alpha_{i}|^{p}\Vert f_{i}\Vert_{p,J_{i}}^{p}}{\sum_{i=1}%
^{n}|\alpha_{i}|^{p}\Vert f_{i}^{\prime\prime}\Vert_{p,J_{i}}^{p}}}\\
&  ={\frac{\sum_{i=1}^{n}|\alpha_{i}|^{p}[J^{0}(J_{i})]^{p}\Vert f_{i}%
^{\prime\prime}\Vert_{p,J_{i}}^{p}}{\sum_{i=1}^{n}|\alpha_{i}|^{p}\Vert
f_{i}^{\prime\prime}\Vert_{p,J_{i}}^{p}}}\geq{\frac{|I|^{2p}}{n^{2p}}}\left(
J^{0}((0,1))\right)  ^{p}.
\end{align*}
Since $\mathcal{B}(0,1)=J^{0}(0,1),$ the proof is complete.
\end{proof}

We summarise these last two theorems in the following

\begin{theorem}
\label{Theorem 3.4}For the approximation numbers of the embedding $E$ we have
\[
\left(  {\frac{|I|}{n+1}}\right)  ^{2}\mathcal{B}(0,1)\leq a_{n}(E)\leq\left(
{\frac{|I|}{n-1}}\right)  ^{2}\mathcal{B}(0,1)
\]
and
\[
\lim_{n\rightarrow\infty}n^{2}a_{n}(E)=|I|^{2}\mathcal{B}(0,1).
\]
The same holds for all injective strict $s-$numbers of $E.$
\end{theorem}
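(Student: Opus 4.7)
The plan is to assemble Theorem 3.4 directly from Theorems 3.1 and 3.3 together with the comparison $b_n(E) \leq a_n(E)$, so essentially no new analytic content is required; the work is almost entirely an index shift and a squeeze argument.

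First I would obtain the upper bound by reindexing Theorem 3.1: replacing $n$ by $n-1$ in the inequality $a_{n+1}(E) \leq (|I|^2/n^2)\mathcal{B}(0,1)$ gives $a_n(E) \leq (|I|/(n-1))^2 \mathcal{B}(0,1)$ for $n \geq 2$. Symmetrically, from Theorem 3.3 applied with $n$ replaced by $n+1$, I get $b_n(E) \geq (|I|/(n+1))^2 \mathcal{B}(0,1)$; since the Bernstein numbers are dominated by the approximation numbers (as noted in Section 2), this yields the lower bound $a_n(E) \geq (|I|/(n+1))^2 \mathcal{B}(0,1)$ and the two-sided estimate is proved.

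Next, to extract the asymptotic, I multiply through by $n^2$ to obtain
\[
\left(\frac{n}{n+1}\right)^{2}|I|^{2}\mathcal{B}(0,1) \;\leq\; n^{2}a_{n}(E) \;\leq\; \left(\frac{n}{n-1}\right)^{2}|I|^{2}\mathcal{B}(0,1),
\]
and both outer expressions tend to $|I|^{2}\mathcal{B}(0,1)$ as $n\to\infty$, giving the claimed limit.

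Finally, for the extension to other $s$-numbers, I would invoke the general fact recalled in Section 2: since the approximation numbers form the largest $s$-number sequence and the Bernstein numbers form the smallest injective strict $s$-number sequence, any injective strict $s$-number $s_{n}$ satisfies $b_{n}(E) \leq s_{n}(E) \leq a_{n}(E)$. The identical squeeze then applies verbatim, so the asymptotic $\lim_{n\to\infty} n^{2} s_{n}(E) = |I|^{2}\mathcal{B}(0,1)$ holds for the Bernstein, Gelfand and Weyl numbers as well. There is no real obstacle here; the only subtlety worth flagging is making sure the reindexing in the two preceding theorems is carried out consistently so that the squeeze is tight.
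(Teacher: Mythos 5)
Your proposal is correct and is precisely the argument the paper intends: Theorem 3.4 is stated as a summary of Theorems 3.1 and 3.3, and the paper leaves implicit exactly the index shifts, the domination $b_{n}(E)\leq s_{n}(E)\leq a_{n}(E)$ for injective strict $s$-numbers, and the squeeze that you carry out. No further comment is needed.
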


Note that from results about eigenvalues for the uniform Euler-Bernoulli beam
problem when $p=2$ (see section 8.4 in \cite{Me}) it follows that the upper
and lower estimates for strict $s-$numbers of $E$ contained in Theorems
\ref{Theorem 3.1} and \ref{Theorem 3.3} cannot be essentially improved.

Let $\overset{a}{W}$\/$_{p}^{2}(I)$ be the closure in $W_{p}^{2}(I)$ of
$C_{a}^{\infty}(I)\cap W_{p}^{2}(I)$ (note that $C_{a}^{\infty}(I)$ is the
space of all functions in $C^{\infty}(I)$ that vanish at the left-hand
endpoint $a$ of the interval $I$). We equip $\overset{a}{W}$\/$_{p}^{2}(I)$
with the norm $\Vert u\Vert_{\overset{a}{{W}_{p}^{2}}(I)}:=\Vert u^{(2)}%
\Vert_{p,I}.$ We shall discuss the embedding $E_{a}:\overset{a}{W}$\/$_{p}%
^{2}(I)\rightarrow L_{p}(I),$ when $1<p<\infty$.

\begin{theorem}
\label{Theorem 3.5}For all $n\in\mathbb{N},$
\begin{equation}
a_{n+1}(E_{a})\leq{\frac{|I|^{2}}{(n-1/2)^{2}}}\mathcal{B}(0,1).
\label{Eq 3.2}%
\end{equation}
and
\begin{equation}
b_{n}(E_{a})\geq{\frac{|I|^{2}}{n^{2}}}\mathcal{B}(0,1). \label{Eq 3.3}%
\end{equation}

\end{theorem}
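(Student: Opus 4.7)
Both estimates will be obtained by adapting, respectively, the upper-bound argument of Theorem \ref{Theorem 3.1} and the lower-bound argument of Theorem \ref{Theorem 3.3} to the fact that functions in $\overset{a}{W}{}_{p}^{2}(I)$ satisfy only the single boundary condition $u(a)=0$. The key scaling identity I will use without further comment is $J^{a}(0,1)=4\mathcal{B}(0,1)$, which follows from combining $J^{0}(I)=J^{a}(I_{1})$ in (\ref{Eq 2.5}) with the scaling (\ref{Eq 2.6}) and Lemma \ref{Lemma 2.2}; equivalently, $J^{a}$ and $\mathcal{B}$ differ on any interval $I$ by a factor of $4$ under the common scaling $|I|^{2}\mathcal{B}(0,1)$.

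For (\ref{Eq 3.2}) I would partition $I$ asymmetrically into $n$ subintervals $I_{0}=(a,a_{1}),I_{1},\dots ,I_{n-1}$ with $|I_{0}|=|I|/(2n-1)$ and $|I_{i}|=2|I|/(2n-1)$ for $i\geq 1$. Define
\[
Kf(x):=\chi_{I_{0}}(x)f(a)+\sum_{i=1}^{n-1}\chi_{I_{i}}(x)K_{I_{i}}f(x).
\]
Since $f(a)=0$ and the remaining pieces of $Kf$ are determined by the $n$ nodal values $f(a_{1}),\dots ,f(a_{n-1}),f(b)$, the rank of $K$ is at most $n$, so $a_{n+1}(E_{a})\leq\Vert E_{a}-K\Vert$. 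On $I_{0}$ the condition $f(a)=0$ gives $\Vert f\Vert _{p,I_{0}}\leq J^{a}(I_{0})\Vert f^{\prime\prime}\Vert _{p,I_{0}}$, and on each middle $I_{i}$ Lemma \ref{Lemma 2.2} gives $\Vert f-K_{I_{i}}f\Vert _{p,I_{i}}\leq \mathcal{B}(I_{i})\Vert f^{\prime\prime}\Vert _{p,I_{i}}$. The choice of widths is made precisely so that the scaled constants balance:
\[
J^{a}(I_{0})=4|I_{0}|^{2}\mathcal{B}(0,1)=\left(\tfrac{|I|}{n-1/2}\right)^{2}\mathcal{B}(0,1)=|I_{i}|^{2}\mathcal{B}(0,1)=\mathcal{B}(I_{i}).
\]
Summing the $p$-th powers as in Theorem \ref{Theorem 3.1} and taking the supremum then yields (\ref{Eq 3.2}).

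For (\ref{Eq 3.3}) I would repeat the construction of Theorem \ref{Theorem 3.3} verbatim, using $n$ equal subintervals $J_{i}$ of length $|I|/n$, rescaled odd extremals $f_{i}$, and $h(x)=\int_{a}^{x}\sum_{i=1}^{n}\alpha_{i}f_{i}^{\prime}(t)\chi_{J_{i}}(t)\,dt$; the only change is that the previously imposed constraint ``$h(b)=0$'', which collapsed one dimension, is no longer needed since $h\in \overset{a}{W}{}_{p}^{2}(I)$ requires only $h(a)=0$ (automatic) and not $h(b)=h^{\prime}(b)=0$. Hence all $n$ coefficients $\alpha_{1},\dots ,\alpha_{n}$ are free, producing an $n$-dimensional subspace $H\subset\overset{a}{W}{}_{p}^{2}(I)$. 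The remaining chain of (in)equalities, relying on Lemma \ref{Lemma 3.2} and the scaling $J^{0}(J_{i})=(|I|/n)^{2}\mathcal{B}(0,1)$, carries over unchanged and delivers the bound $b_{n}(E_{a})\geq |I|^{2}/n^{2}\,\mathcal{B}(0,1)$. The only genuinely new input in the whole argument is the asymmetric partition in the upper bound, forced by the factor $4$ discrepancy between $J^{a}$ and $\mathcal{B}$; everything else is a direct transcription of the previous two theorems.
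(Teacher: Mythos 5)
Your proof is correct and takes essentially the same route as the paper's: the same asymmetric partition with $|I_{0}|=|I|/(2n-1)$ and rank-$n$ interpolation operator for (\ref{Eq 3.2}), and the same $n$-dimensional subspace of Theorem \ref{Theorem 3.3} with the constraint $h(b)=0$ dropped for (\ref{Eq 3.3}). The paper simply refers back to Theorems \ref{Theorem 3.1} and \ref{Theorem 3.3} for the details you make explicit, in particular the balancing identity $J^{a}(I_{0})=4|I_{0}|^{2}\mathcal{B}(0,1)=|I_{i}|^{2}\mathcal{B}(0,1)$ that motivates the $1:2$ ratio of subinterval lengths.
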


\begin{proof}
We start with the proof of (\ref{Eq 3.2}).

For each $n\in\mathbb{N}$ let $I=\cup_{i=0}^{n}I_{i}$, where $I_{i}%
=(a_{i},a_{i+1}]$ for $i=0,1,...,n-1,$ $I_{n}=(a_{n},a_{n+1}),$ with
\begin{align*}
a_{0}  &  =a,a_{1}=a+{\frac{|I|}{2n-1}},\\
a_{i}  &  =a+{\frac{|I|}{2n-1}}+(i-1){\frac{|I|}{n-1/2}}\quad\text{for
}1<i\leq n.
\end{align*}
Then we have $2|I_{0}|=|I_{i}|=(b-a)/(n-1/2)$ for $0<i\leq n$. For any
function $f$ on $I$ set
\[
Kf(x):=\chi_{I_{0}}(x)f(a)+\sum_{i=1}^{n-1}\chi_{I_{i}}(x)K_{I_{i}}f(x),
\]
where $K_{I}$ is given by (\ref{Eq 2.10}). By the same arguments as in the
proof of Theorem \ref{Theorem 3.1} we obtain (\ref{Eq 3.2}).

Now we prove (\ref{Eq 3.3}). As in the proof of Theorem \ref{Theorem 3.3} we
set: $I=\cup_{i=1}^{n}J_{i}$, where $J_{i}=(b_{i},b_{i+1}]$ for $i=1,...,n-1,$
$J_{n}=(b_{n},b_{n+1}),$ with
\[
b_{i}=a+(i-1){\frac{|I|}{n}}\quad\text{for }1\leq i\leq n+1,
\]
and consider the extremal functions $f_{i}$ with $\Vert f_{i}^{\prime\prime
}\Vert_{p,J_{i}}=1$.

Take $\alpha=(\alpha_{1},...,\alpha_{n})\in\mathbb{R}^{n}$ and let
\[
h(x):=\int_{a}^{x}\sum_{i=1}^{n}\alpha_{i}f_{i}^{\prime}(t)\chi_{J_{i}}(t)dt.
\]
Note that $h(a)=0$. Denote by $H$ the collection of all such functions $h;$
plainly $\mathrm{rank}$ $H=n$. Then as in the proof of Theorem
{\ref{Theorem 3.3}} we have%

\[
{\frac{\Vert h\Vert_{p,I}^{p}}{\Vert h^{\prime\prime}\Vert_{p,I}^{p}}}%
\geq{\frac{|I|^{2p}}{n^{2p}}}\left(  J^{0}((0,1))\right)  ^{p}={\frac
{|I|^{2p}}{n^{2p}}}\left(  \mathcal{B}((0,1))\right)  ^{p},
\]
which concludes the proof.
\end{proof}

As before we summarise the previous theorem in the following statement

\begin{theorem}
\label{Theorem 3.6}For the approximation numbers of the embedding $E_{a}$ we
have
\[
\left(  {\frac{|I|}{n}}\right)  ^{2}\mathcal{B}(0,1)\leq a_{n}(E_{a}%
)\leq\left(  {\frac{|I|}{n-3/2}}\right)  ^{2}\mathcal{B}(0,1)
\]
and
\[
\lim_{n\rightarrow\infty}n^{2}a_{n}(E_{a})=|I|^{2}\mathcal{B}(0,1).
\]
The same holds for all injective strict $s-$numbers of $E_{a}.$
\end{theorem}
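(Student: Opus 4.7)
The plan is to deduce Theorem 3.6 directly from Theorem 3.5 together with the general ordering of $s$-numbers recalled in the preliminaries, so essentially no new analysis is required; the argument is bookkeeping plus a squeeze.

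First I would obtain the explicit two-sided bound for $a_n(E_a)$. The upper bound in (\ref{Eq 3.2}) reads $a_{n+1}(E_a) \leq |I|^2 \mathcal{B}(0,1)/(n-1/2)^2$, so replacing $n$ by $n-1$ yields $a_n(E_a) \leq |I|^2 \mathcal{B}(0,1)/(n-3/2)^2$. For the lower bound I invoke the inequality $b_n(S) \leq a_n(S)$, stated in Section~2, combined with (\ref{Eq 3.3}): this immediately gives $a_n(E_a) \geq b_n(E_a) \geq |I|^2 \mathcal{B}(0,1)/n^2$. Together these furnish the displayed two-sided estimate of the theorem.

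Next I would derive the limit by a standard squeeze. Multiplying the two-sided estimate by $n^2$ gives
\[
\Bigl(\frac{n}{n}\Bigr)^{\!2}|I|^2 \mathcal{B}(0,1) \leq n^2 a_n(E_a) \leq \Bigl(\frac{n}{n-3/2}\Bigr)^{\!2}|I|^2 \mathcal{B}(0,1),
\]
and since $n/(n-3/2)\to 1$ as $n\to\infty$, both sides converge to $|I|^2 \mathcal{B}(0,1)$, which establishes the asymptotic formula.

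Finally, for the last sentence about other injective strict $s$-numbers, I would appeal to the general facts recalled in Section~2: the approximation numbers are the largest $s$-numbers and the Bernstein numbers are the smallest injective strict $s$-numbers. Thus every injective strict $s$-number $s_n$ satisfies $b_n(E_a) \leq s_n(E_a) \leq a_n(E_a)$ for all $n$, and applying the same squeeze with the upper bound on $a_n(E_a)$ and the lower bound on $b_n(E_a)$ from Theorem \ref{Theorem 3.5} yields the same asymptotic identity for every such $s$-number, in particular for the Gelfand and Weyl numbers.

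There is no real obstacle; the only mild subtlety is the index shift $n+1\mapsto n$ in the upper estimate, which produces the slightly worse constant $n-3/2$ but is harmless for the limit. The argument is entirely parallel to the way Theorem \ref{Theorem 3.4} is extracted from Theorems \ref{Theorem 3.1} and \ref{Theorem 3.3}.
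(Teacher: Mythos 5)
Your proposal is correct and is exactly the argument the paper intends: Theorem \ref{Theorem 3.6} is stated as a summary of Theorem \ref{Theorem 3.5}, obtained by the index shift $n+1\mapsto n$ in (\ref{Eq 3.2}), the inequality $b_n(E_a)\le a_n(E_a)$ for the lower bound, the squeeze for the limit, and the sandwich $b_n\le s_n\le a_n$ for injective strict $s$-numbers. Nothing is missing.
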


\section{ Estimates for $s$-numbers of an operator of Volterra type}

Let us consider the Volterra type operator $T_{m}$ acting from $L_{p}(I)$ to
$L_{p}(I)$ and defined, for each $m\in\mathbb{N},$ by:%

\[
T_{m}f(x)=\int_{a}^{x}(x-t)^{m-1}f(t)dt,\qquad\text{when}\quad x\in I.
\]

We can see that
\[
{\frac{d^{m} }{dx^{m}}}\left(  T_{m} f\right)  (x)= f(x).
\]

For $T_{1}$ it is known that when $1<p<\infty$ we have:
\[
a_{n}(T_{1})=\gamma_{p}{\frac{|I|}{n-1/2}}.
\]
Here $a_{n}$ can be replaced by any strict s-number (see \cite{EL2}).

When $m>1$ no similar result is known, although if $p=2$ it is possible to
obtain estimates via the connection between approximation numbers and singular
numbers (see \cite{NS}).

We consider the case $m=2$ and note that $T_{2}:L_{p}(I)\rightarrow L_{p}(I)$
can be written as $T_{2}=E_{a}\circ S,$ where $S:L_{p}(I)\rightarrow
\overset{a}{W}$\/$_{p}^{2}(I)$ is given by%
\[
(Sf)(x)=\int\nolimits_{a}^{x}(x-t)f(t)dt
\]
and $\left\Vert S\right\Vert =1.$ This isometric relationship with
$E_{a}:\overset{a}{W}$\/$_{p}^{2}(I)\rightarrow L_{p}(I)$ means that we can,
instantly, state the following theorems:

\begin{theorem}
\label{Theorem 4.1}For all $n\in\mathbb{N},$
\begin{equation}
a_{n+1}(T_{2})\leq{\frac{|I|^{2}}{(n-1/2)^{2}}}\mathcal{B}(0,1).
\label{Eq 4.2}%
\end{equation}
and
\begin{equation}
b_{n}(T_{2})\geq{\frac{|I|^{2}}{n^{2}}}\mathcal{B}(0,1). \label{Eq 4.3}%
\end{equation}

\end{theorem}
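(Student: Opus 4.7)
The plan is to exploit the factorization $T_2 = E_a \circ S$ in which $S \colon L_p(I) \to \overset{a}{W}_p^2(I)$ is given by $(Sf)(x) = \int_a^x (x-t) f(t)\, dt$. Since $(Sf)'' = f$, the chosen norm on $\overset{a}{W}_p^2(I)$ gives $\|Sf\|_{\overset{a}{W}_p^2(I)} = \|(Sf)''\|_{p,I} = \|f\|_{p,I}$, so $S$ is an isometry with $\|S\| = 1$. The standard multiplicativity of approximation numbers under composition with a bounded operator then yields $a_{n+1}(T_2) \le \|S\|\, a_{n+1}(E_a) = a_{n+1}(E_a)$, and the upper estimate (\ref{Eq 4.2}) follows at once from Theorem \ref{Theorem 3.5}.

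For the Bernstein bound (\ref{Eq 4.3}) I would transport the $n$-dimensional extremal subspace built in the proof of Theorem \ref{Theorem 3.5} back to $L_p(I)$ via $S^{-1}$. Recall that subspace $H$ consists of functions
\[
h(x) = \int_a^x \sum_{i=1}^n \alpha_i f_i'(t)\chi_{J_i}(t)\, dt,
\]
where each $f_i$ is the rescaling to $J_i$ of the odd extension to $(-1,1)$ of the extremal function for $J^a((0,1))$. The key point is that each such $h$ lies in the range of $S$: trivially $h(a) = 0$, and since the model function on $(-1,1)$ satisfies $f'(\pm 1) = 0$, also $h'(a) = \alpha_1 f_1'(b_1) = 0$ because $b_1 = a$ corresponds to the left endpoint of the model interval. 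Hence $h = S(h'')$ for every $h \in H$, so $Y_n := \{h'' : h \in H\}$ is an $n$-dimensional subspace of $L_p(I)$ that is isometrically identified with $H$ under $S$.

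On this subspace, for any nonzero $h'' \in Y_n$,
\[
\frac{\|T_2 h''\|_{p,I}}{\|h''\|_{p,I}} = \frac{\|E_a(S h'')\|_{p,I}}{\|h''\|_{p,I}} = \frac{\|h\|_{p,I}}{\|h''\|_{p,I}} \ge \frac{|I|^2}{n^2}\mathcal{B}(0,1),
\]
the last inequality being precisely the estimate obtained within the proof of Theorem \ref{Theorem 3.5}. Applying the definition of the Bernstein numbers to the particular subspace $Y_n$ then yields (\ref{Eq 4.3}). I expect the only delicate point to be the verification that $h'(a) = 0$ for every $h \in H$, so that $H$ is actually contained in the range of $S$; once this boundary behaviour is confirmed from the earlier construction the proof is essentially automatic. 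Had this condition failed, one would be forced to impose an extra linear constraint on the coefficients $\alpha_i$, losing a dimension and weakening the bound by a constant factor, so it is reassuring that the construction of Theorem \ref{Theorem 3.5} already produces functions with vanishing derivative at $a$.
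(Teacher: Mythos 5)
Your proposal is correct and follows exactly the route the paper takes: it exploits the factorization $T_{2}=E_{a}\circ S$ with $S$ an isometry of norm one, deducing the upper bound from multiplicativity of approximation numbers and the lower bound by pulling the extremal subspace $H$ of Theorem \ref{Theorem 3.5} back to $L_{p}(I)$ via $h\mapsto h^{\prime\prime}$ (the verification that $h(a)=h^{\prime}(a)=0$, so that $h=S(h^{\prime\prime})$, is indeed the one point to check, and it holds because $f^{\prime}(\pm1)=0$). The paper states this transfer ``instantly''; you have merely supplied the details it leaves implicit.
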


\begin{theorem}
\label{Theorem 4.2}For the approximation numbers of the operator $T_{2}$ we
have
\[
\left(  {\frac{|I|}{n}}\right)  ^{2}\mathcal{B}(0,1)\leq a_{n}(T_{2}%
)\leq\left(  {\frac{|I|}{n-3/2}}\right)  ^{2}\mathcal{B}(0,1)
\]
and
\[
\lim_{n\rightarrow\infty}n^{2}a_{n}(T_{2})=|I|^{2}\mathcal{B}(0,1).
\]
The same holds for all injective strict $s-$numbers of $T_{2}.$
\end{theorem}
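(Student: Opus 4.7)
The plan is to package the two-sided bounds of Theorem~\ref{Theorem 4.1} with the universal ordering $b_n \le a_n$ (recorded in Section~2) and apply a routine squeeze. Essentially all substantive work has already been done: the construction of the approximating and extremal subspaces for $E_a$ in Theorem~\ref{Theorem 3.5} was transferred to $T_2 = E_a\circ S$ in Theorem~\ref{Theorem 4.1}, and Theorem~\ref{Theorem 4.2} is merely a consolidation together with the asymptotic statement.

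First I would derive the explicit two-sided estimate for $a_n(T_2)$. The upper bound is obtained by reindexing \eqref{Eq 4.2}: replacing $n$ by $n-1$ yields $a_n(T_2)\le (|I|/(n-3/2))^2\,\mathcal{B}(0,1)$ for $n\ge 2$. The lower bound follows from $a_n(T_2)\ge b_n(T_2)$ combined with \eqref{Eq 4.3}, giving $a_n(T_2)\ge (|I|/n)^2\,\mathcal{B}(0,1)$. These two inequalities are exactly the two-sided estimate asserted in the theorem.

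Next, multiplying the resulting chain by $n^2$ and using that $(n/(n-3/2))^2\to 1$ as $n\to\infty$, the squeeze theorem delivers $\lim_{n\to\infty} n^2 a_n(T_2) = |I|^2\,\mathcal{B}(0,1)$.

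Finally, for any injective strict $s$-number $s_n$, the extremal position of $a_n$ and $b_n$ among $s$-numbers (Section~2; see also \cite{EL2}, Chapter~5) gives $b_n(T_2)\le s_n(T_2)\le a_n(T_2)$, so the same squeeze produces the identical two-sided estimate and limit for $s_n(T_2)$. The main obstacle at this stage is essentially none; the only care required is the bookkeeping of the index shift $n\mapsto n-1$ between \eqref{Eq 4.2} and the desired estimate, and the observation that the squeeze argument transfers verbatim from $a_n$ and $b_n$ to any intermediate injective strict $s$-number.
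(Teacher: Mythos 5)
Your proposal is correct and coincides with the paper's (implicit) argument: Theorem \ref{Theorem 4.2} is obtained exactly by reindexing the upper bound \eqref{Eq 4.2}, using $b_n\le a_n$ with \eqref{Eq 4.3} for the lower bound, squeezing for the limit, and sandwiching any injective strict $s$-number between $b_n$ and $a_n$. Nothing further is needed.
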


\begin{acknowledgement}
The authors would like to thank Prof. B. Kawohl for information about the
literature and Prof. P. Dr\'{a}bek and Prof. J. Benedikt for their valuable comments.
\end{acknowledgement}

\bigskip D. E. Edmunds, Department of Mathematics, University of Sussex,
Pevensey 2 Building, Falmer, Brighton BN1 9QH, UK

\ \ \ \ email address: davideedmunds@aol.com

\ \ \  telephone +44(0)1273558392; fax +44(0)1273876677 

\medskip J. Lang, Department of Mathematics, Ohio State University, 100 Math
Tower, 231 West 18th Avenue, Columbus, OH 43210-1174, USA

\medskip\ \ \ \ \ email address: lang@math.ohio-state.edu

\ \ \  telephone (614)292-9133; fax (614)292-5817  


\end{document}